\def\mathcal{\mathscr}
\newtheorem{thm}{Theorem}[section]
\newtheorem{lem}[thm]{Lemma}
\newtheorem{cor}[thm]{Corollary}
\theoremstyle{definition}
\newtheorem{rem}[thm]{Remark}
\newtheorem{defn}[thm]{Definition}
\newcommand{\mca}[1]{{\mathcal{#1}}}
\def\Z{{\mathbb Z}}
\def\C{{\mathbb C}}
\def\R{{\mathbb R}}
\def\len{\text{\rm length}}
\def\Fix{\text{\rm Fix}}
\def\HZ{\text{\rm HZ}}
\def\id{\text{\rm id}}
\def\supp{\text{\rm supp}}
\begin{document}
\pagestyle{plain}
\thispagestyle{plain}

\title[Hofer-Zehnder capacity and a Hamiltonian circle action with noncontractible orbits]
{Hofer-Zehnder capacity and a Hamiltonian circle action with noncontractible orbits}

\author[Kei Irie]{Kei Irie}
\address{Department of Mathematics, Faculty of Science, Kyoto University,
Kyoto 606-8502, Japan}
\email{iriek@math.kyoto-u.ac.jp}

\subjclass[2010]{53D40, 70H12}
\date{\today}

\begin{abstract}
Let $(M,\omega)$ be an aspherical symplectic manifold, which is closed or convex. 
Let $U$ be an open set in $M$, which admits a circle action generated by an autonomous Hamiltonian $H \in C^\infty(U)$, such that 
each orbit of the circle action is not contractible in $M$. 
Under these assumptions, we prove that the Hofer-Zehnder capacity of $U$ 
is bounded by the Hofer norm of $H$. 
The proof uses a variant of the energy-capacity inequality, which is proved by the theory of action selectors. 
\end{abstract}

\maketitle

\section{Introduction}
\subsection{Hofer-Zehnder capacity} 
First we fix some notations. 
We set $S^1:=\R/\Z$. 
For any topological space $X$, we set 
$\pi_1'(X):=C^0(S^1,X)/\sim$, where $\gamma \sim \gamma'$ means that $\gamma$ and $\gamma'$ are homotopic. 
For each $\gamma \in C^0(S^1,X)$, $\bar{\gamma} \in C^0(S^1,X)$ is defined as $\bar{\gamma}(t):=\gamma(-t)$. 
Since $\gamma \sim \gamma' \implies \bar{\gamma} \sim \bar{\gamma'}$, one can define 
$\bar{\alpha} \in \pi_1'(X)$ for any $\alpha \in \pi_1'(X)$. 
When $X$ is path connected, $c_X$ denotes the element in $\pi_1'(X)$ which consists of contractible loops on $X$. 

We introduce a refinement of the Hofer-Zehnder capacity, taking into account free homotopy classes of periodic orbits. 
Let $(M,\omega)$ be a symplectic manifold. 
We always assume that $\partial M = \emptyset$. 
For $H \in C^\infty(M)$, its Hamiltonian vector field $X_H \in \mca{X}(M)$ is defined by the equation $\omega(X_H,\,\cdot\,)=-dH(\,\cdot \,)$.
For any $S \subset \pi'_1(M)$, $\mca{H}^S_{\HZ}(M,\omega)$ denotes the set of $H \in C_0^\infty(M)$ which 
satisfies the following properties: 
\begin{enumerate}
\item $H \le 0$ and $\{H=0\} \ne \emptyset$. 
\item There exists a nonempty open set $U \subset M$ such that $H|_U \equiv \min H$.
\item Any nonconstant periodic orbit $\gamma$ of $X_H$ satisfying $[\gamma] \in S$ has period $>1$. 
\end{enumerate}
Then we define 
\[
c^S_\HZ(M,\omega):= \sup \{ -\min H \mid H \in \mca{H}^S_\HZ(M,\omega) \}.
\]
Following properties are immediate from the definition:
\begin{itemize}
\item For any $S, S' \subset \pi_1'(M)$,  $S \subset S' \implies c^S_\HZ(M,\omega) \ge c^{S'}_\HZ(M,\omega)$. 
\item For any nonempty open set $U$ in $M$, let $i_U^M:U \to M$ denote the inclusion map, and let
$(i_U^M)_*: \pi_1'(U) \to \pi_1'(M)$ denote the induced map. Then, for any $S \subset \pi_1'(M)$, 
$c_\HZ^{(i_U^M)_*^{-1}(S)}(U,\omega) \le c_\HZ^S(M,\omega)$. 
\item Abbreviateing $c_\HZ(M,\omega):=c^{\pi'_1(M)}_\HZ(M,\omega)$, $c_{\HZ}(M,\omega) \le c_{\HZ}^S(M,\omega)$ for any $S \subset \pi_1'(M)$. 
Moreover, $c_\HZ(U,\omega) \le c_\HZ(M,\omega)$ for any nonempty open set $U$ in $M$. 
\end{itemize}
$c_\HZ(M,\omega)$ defined as above coincides with the original Hofer-Zehnder capacity (\cite{HZ}, \cite{HZ'}). 

\subsection{Main result}
First we fix some terminologies. 
Let $(M,\omega)$ be a symplectic manifold such that $\partial M=\emptyset$.
\begin{itemize}
\item $(M,\omega)$ is called \textit{aspherical} when $\omega|_{\pi_2(M)}=0$. 
\item $(M,\omega)$ is called \textit{convex} when there exists an increasing sequence
$M_1 \subset M_2 \subset \cdots$ of compact codimension $0$ submanifolds of $M$, which satisfies $\bigcup_i M_i=M$ and the following property
for each $i \ge 1$: there exists a vector field $X_i$ defined on some neighborhood of $\partial M_i$ in $M_i$, which
points strictly outwards on $\partial M_i$ and $L_{X_i}\omega=\omega$. 
\item
A circle action $S^1 \circlearrowleft M$ is called \textit{Hamiltonian} action generated by $H \in C^\infty(M)$, when there holds 
$\tfrac{d}{dt}(t \cdot x)|_{t=0} = X_H(x)$ for any $x \in M$. 
\end{itemize}

The main result of this note is the following:

\begin{thm}\label{thm:circle}
Let $(M,\omega)$ be a connected aspherical symplectic manifold, which is closed or convex. 
Let $U$ be an open set in $M$, which admits a Hamiltonian circle action generated by $H \in C^\infty(U)$. 
Suppose that for any $x \in U$, 
$\gamma^x: S^1 \to M; t \mapsto t \cdot x$ is not contractible in $M$, and 
$[\gamma^x] \in \pi_1'(M)$ does not depend on $x \in U$. 
Then, setting $\alpha:=[\gamma^x] \in \pi'_1(M)$, 
$c_\HZ^{(i^M_U)_*^{-1}(\{c_M, \bar{\alpha}\})}(U,\omega) \le \sup H - \inf H$. 
\end{thm}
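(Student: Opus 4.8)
The plan is to prove the inequality in contrapositive form via an energy--capacity argument phrased through action selectors attached to fixed free homotopy classes. Writing $\|H\|_{osc}:=\sup H-\inf H$ and $S:=(i^M_U)_*^{-1}(\{c_M,\bar\alpha\})$, it suffices to show that every $F\in\mathcal{H}^S_{\HZ}(U,\omega)$ satisfies $-\min F\le\|H\|_{osc}$. So I would fix such an $F$, extend it by zero to a compactly supported function on $M$, set $c:=-\min F$, and argue by contradiction: assume both $c>\|H\|_{osc}$ and that $F$ genuinely satisfies property (3), i.e. $F$ has no nonconstant periodic orbit of period $\le 1$ whose class lies in $S$ (equivalently, no nonconstant $1$-periodic orbit of $X_F$ that is contractible in $M$ or lies in class $\bar\alpha$). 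Asphericity guarantees the symplectic action is single valued and excludes sphere bubbling, while closedness or convexity provides, via a maximum principle for compactly supported Hamiltonians, a well-defined filtered Floer theory. From this I would extract, for each class $\beta\in\pi_1'(M)$, an action selector $\ell_\beta(\cdot)$ with the two properties I need: spectrality (the value $\ell_\beta(G)$ is the action of an actual nonconstant $1$-periodic orbit of $X_G$ in class $\beta$, unless the only orbits are constants) and the identification, for a nonpositive plateau Hamiltonian with no short contractible orbits, of the top contractible selector with $-\min$.

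The geometric heart of the argument is that $H$ defines a \emph{loop} $\Phi=\{\phi^t_H\}_{t\in[0,1]}$ of Hamiltonian diffeomorphisms, since $\phi^1_H=\id$. I would use the induced correspondence on free loops $L_\Phi:\gamma\mapsto(t\mapsto\phi^t_H(\gamma(t)))$, which restricts to a bijection between the $1$-periodic orbits of $F$ and those of $H\#F$, where $(H\#F)_t=H+F_t\circ\phi^{-t}_H$ has time-$1$ map again equal to $\phi^1_F$; this bijection preserves nonconstancy. The two facts I need are: (i) $L_\Phi$ shifts free homotopy classes by $\alpha$, so a contractible orbit of $F$ is sent to an $\alpha$-orbit of $H\#F$ and a $\bar\alpha$-orbit of $F$ to a contractible orbit of $H\#F$; and (ii) $L_\Phi$ shifts symplectic action by a quantity which, after a compatible normalization of the reference loops in the classes $c_M$, $\alpha$, $\bar\alpha$, is confined to an interval of length $\|H\|_{osc}$. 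Fact (ii) is where the oscillation of $H$ enters, and it reduces to the computation that along the cylinders swept by the circle action the area and Hamiltonian contributions combine to values of $H$.

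With these in hand the contradiction is produced as in the classical energy--capacity inequality. Because $F\le 0$ attains its minimum on an open set and, by our assumption, has no nonconstant period-$\le 1$ orbit that is contractible in $M$, the top contractible selector satisfies $\ell_{c_M}(F)=-\min F=c$, realized by the constant orbit on the plateau. Transporting this generator through $L_\Phi$ and invoking spectrality in class $\alpha$ for $H\#F$, together with the action-shift bound (ii), I would obtain a nonconstant $1$-periodic orbit of $H\#F$ in class $\alpha$ with action exceeding $\|H\|_{osc}$; via $L_\Phi^{-1}$ and fact (i) this is a nonconstant period-$\le 1$ orbit of $X_F$ that is contractible in $M$, while running the comparison in the reversed class produces instead one in class $\bar\alpha$. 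Either alternative has class in $S$ and contradicts property (3) once $c>\|H\|_{osc}$. Hence $c\le\|H\|_{osc}$, and taking the supremum over $F$ yields the bound; forbidding both $c_M$ and $\bar\alpha$ is exactly what is needed, since the selector argument localizes the produced orbit only up to the $\alpha$-shift, so both translates must be excluded.

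I expect the main obstacle to be the rigorous construction and bookkeeping of the free-homotopy-class action selectors in the noncompact convex case, where one must ensure through a maximum principle that Floer trajectories for compactly supported Hamiltonians remain in a fixed compact region, so that the filtered homology, its selectors, and their Hofer-continuity are all well defined. A secondary but genuine difficulty is making the class arithmetic in $\pi_1'(M)$ (which records conjugacy classes rather than group elements) and the normalization of reference loops precise enough that the action shift under $L_\Phi$ is controlled \emph{exactly} by $\|H\|_{osc}$ and not merely up to an uncontrolled constant; and one must also accommodate that $H$ is defined only on $U$ and need not be compactly supported there, which I would handle by cutting $H$ off outside $\supp F$ while preserving the loop structure on the region swept by the orbits of $X_F$.
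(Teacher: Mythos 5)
Your proposal follows the Ginzburg--G\"urel/Macarini strategy of transporting Floer data by a loop of Hamiltonian diffeomorphisms, but it has a genuine gap, and it is precisely the difficulty this paper is engineered to avoid. The correspondence $L_\Phi$ and the class-$\beta$ selectors $\ell_\beta$ require $\Phi=\{\varphi^t_H\}$ to be a \emph{global} loop of Hamiltonian diffeomorphisms of the manifold on which Floer theory is run; here the circle action exists only on $U$. Your fix --- cutting $H$ off outside a neighbourhood of $\supp F$ --- destroys exactly the property you need: for the cutoff $\rho H$ one has $\varphi^1_{\rho H}=\id$ only on the region where $\rho\equiv 1$, not globally, so $L_\Phi$ no longer carries $1$-periodic orbits of $F$ bijectively to $1$-periodic orbits of $(\rho H)\#F$. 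The class-$\alpha$ complex of $(\rho H)\#F$ then contains extra generators --- fixed points of $\varphi^1_{\rho H}$ in the cutoff transition region whose $\rho H$-orbits happen to lie in class $\alpha$ --- with a priori uncontrolled actions, and ``spectrality'' may simply return one of these, producing no contradiction. Relatedly, selectors $\ell_\beta$ with spectrality for \emph{all} Hamiltonians do not exist when $\beta\neq c_M$: by continuation invariance, Floer homology in a noncontractible class is isomorphic to that of a $C^2$-small Hamiltonian, which has no generators in that class, so the homology vanishes and there is no nonzero class whose filtered level could define a selector; such selectors exist only relative to a genuine global loop. The paper's proof is built around this obstruction: Theorem \ref{thm:main} assumes only $\varphi^1_H|_U=\id_U$, uses only the ordinary contractible-class selector $\sigma$ of Schwarz and Frauenfelder--Schlenk, and disposes of the orbits through points $x\notin U$ --- where the flow is not a loop --- by observing that they are orbits of $H$ itself, whence $\Sigma^\circ(aK_\chi*H)\subset\Sigma^\circ(H)$; continuity of $\sigma$, nowhere-density of $\Sigma^\circ(H)$, and the subadditivity axiom (AS5) then give $\sigma(K)=\sigma(K_\chi)\le\sigma(H)+E_+(H)\le\|H\|$, with no Floer theory in noncontractible classes anywhere.

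There is a second, independent gap in the class bookkeeping. Your fact (i) asserts that $L_\Phi^{-1}$ of a class-$\alpha$ orbit of $H\#F$ is a contractible orbit of $F$ (and that the reversed run lands in class $\bar{\alpha}$). Since $\pi_1'(M)$ consists of \emph{conjugacy} classes, this fails when $\pi_1(M)$ is non-abelian, which is the typical case for aspherical $M$. Writing $a$ for the based class of the circle orbit through $x$ and $b$ for the based class of the $F$-orbit $\gamma$, the statement that $L_\Phi\gamma$ has free class $\alpha$ means only that $ab$ is conjugate to $a$, i.e.\ $b=a^{-1}kak^{-1}$ for some $k\in\pi_1(M,x)$; such $b$ need be neither trivial nor of free class $\bar{\alpha}$. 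Hence the nonconstant orbit of $F$ that your argument produces need not have class in $(i^M_U)_*^{-1}(\{c_M,\bar{\alpha}\})$, and no contradiction with the admissibility of $F$ results. The paper's concatenation argument uses only the safe direction of this implication: if the concatenation of $\gamma^x_H$ (based class $a$) with an orbit of $K$ (based class $b$) is contractible, then $b=a^{-1}$ exactly, so the $K$-orbit has free class exactly $\bar{\alpha}$. This is why excluding only the two classes $c_M$ and $\bar{\alpha}$ suffices for the paper's argument but not for your transport argument.
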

\begin{rem}
In \cite{M}, L. Macarini gives a similar upper bound of $c_\HZ(U,\omega)$, provided that $U$ is a connected open set in a geometrically bounded symplectic manifold, 
and $U$ admits a free Hamiltonian circle action, which satisfies an additional condition on "the order of the action".
For precise statement, see Theorem 1.1 in \cite{M}. 
\end{rem}

Theorem \ref{thm:circle} is proved in section 2. First we give the following application:

\begin{cor}\label{cor:cotangent}
Let $N$ be a compact connected Riemannian manifold,
$\omega_N$ be the standard symplectic form on $T^*N$, and 
$DT^*N:=\{(q,p) \in T^*N \mid |p|<1\}$. 
Suppose that $N$ admits a circle action (which may not preserve the metric), such that for any $x \in N$,
$\gamma^x: S^1 \to N; t \mapsto t \cdot x$ is not contractible. 
Then, setting $\alpha:= [\gamma^x] \in \pi_1'(N)$, 
\[
c_\HZ^{\{c_N,\bar{\alpha}\}}(DT^*N, \omega_N) \le 2 \sup_{x \in N} \len(\gamma^x).
\]
\end{cor}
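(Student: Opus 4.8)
The plan is to deduce the corollary from Theorem \ref{thm:circle} applied to the cotangent bundle. First I would set $M:=T^*N$ with its canonical symplectic form $\omega_N=d\lambda_N$, and check the two structural hypotheses. Asphericity is automatic: since $\omega_N$ is exact, Stokes' theorem gives $\int_{S^2}f^*\omega_N=\int_{S^2}d(f^*\lambda_N)=0$ for every smooth $f:S^2\to T^*N$, so $\omega_N|_{\pi_2(T^*N)}=0$. Convexity follows from the standard exhaustion $M_i:=\{(q,p)\mid |p|_g\le i\}$, which are compact codimension-$0$ submanifolds because $N$ is compact; the fiberwise Liouville (dilation) vector field $X$, characterized by $\iota_X\omega_N=\lambda_N$, satisfies $L_X\omega_N=d\iota_X\omega_N=\omega_N$ and points strictly outward on each $\partial M_i=\{|p|_g=i\}$. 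Connectedness of $T^*N$ is inherited from $N$.

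Next I would produce the Hamiltonian circle action. Let $V\in\mca{X}(N)$ be the infinitesimal generator of the given $S^1$-action on $N$. Its canonical cotangent lift is a Hamiltonian $S^1$-action on $T^*N$ preserving $\lambda_N$, generated by the momentum function $H(q,p)=\langle p,V_q\rangle$. Every orbit of this lifted action through a point $(q,p)$ projects under $T^*N\to N$ onto the orbit $\gamma^q$; since the projection and the zero section are mutually inverse homotopy equivalences, $\pi_1'(T^*N)\cong\pi_1'(N)$, and the lifted orbit is noncontractible in $T^*N$ with class the image of $\alpha$, independent of $(q,p)$. For any invariant open set the inclusion likewise induces an isomorphism on $\pi_1'$, so the superscript $(i^M_U)_*^{-1}(\{c_{T^*N},\bar\alpha\})$ of Theorem \ref{thm:circle} is exactly $\{c_N,\bar\alpha\}$ under the identification $\pi_1'(U)\cong\pi_1'(N)$; thus Theorem \ref{thm:circle} will yield a bound of the form $\sup H-\inf H$ on precisely the capacity appearing in the corollary.

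It remains to arrange that this Hofer-norm bound equals $2\sup_{x}\len(\gamma^x)$. Because the orbits are noncontractible, the action has no fixed points, so $V$ is nowhere zero and $\ell(\,\cdot\,):=\len(\gamma^{(\cdot)})$ is a positive function that is constant along orbits. Averaging $g$ over $S^1$ gives an invariant metric, which I would then rescale in the $V$-direction to an $S^1$-invariant metric $\hat g$ with $|V|_{\hat g}\equiv\ell$ (possible since $\ell$ is invariant and $V\ne 0$); for $\hat g$ the lifted action is isometric, hence preserves the round bundle $D_{\hat g}:=\{|p|_{\hat g}<1\}$, and on $D_{\hat g}$ one computes $\sup H-\inf H=2\sup_q|V_q|_{\hat g}=2\sup_x\len(\gamma^x)$ by duality of the norms. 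Applying Theorem \ref{thm:circle} with $U=D_{\hat g}$ then gives $c_\HZ^{\{c_N,\bar\alpha\}}(D_{\hat g},\omega_N)\le 2\sup_x\len(\gamma^x)$.

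The main obstacle is the final reconciliation: the corollary asks for the round disk bundle $DT^*N$ of the \emph{given} metric $g$, whereas the action naturally lives on $D_{\hat g}$. I would bridge this using monotonicity of $c_\HZ^{\{c_N,\bar\alpha\}}$ under inclusions of open sets together with the linear scaling of $c_\HZ$ under fiberwise dilations $(q,p)\mapsto(q,\lambda p)$, comparing $DT^*N$ with $D_{\hat g}$ and tuning the rescaling defining $\hat g$ so that the constants collapse to exactly $2\sup_x\len(\gamma^x)$. Controlling this comparison—i.e. ensuring that passing between the two metrics does not inflate the bound beyond the length—is the delicate part, and is where the specific choice of $\hat g$ (and, if needed, an auxiliary conformal adjustment) must be made precise.
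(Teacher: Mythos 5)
Your first two paragraphs (exactness and convexity of $T^*N$, the cotangent lift generated by $H(q,p)=p(V_q)$, the identification $\pi_1'(T^*N)\cong\pi_1'(N)$) match what the paper leaves implicit, and your application of Theorem \ref{thm:circle} to the modified disk bundle $D_{\hat g}$ is valid as far as it goes. But the final comparison between $D_{\hat g}$ and $DT^*N$, which you yourself flag as ``the delicate part,'' is not a technicality to be arranged afterwards: it is the whole difficulty, and the tools you propose provably cannot close it. Any argument of the shape ``monotonicity plus fiberwise dilation'' gives $c_\HZ^{\{c_N,\bar{\alpha}\}}(DT^*N,\omega_N)\le 2\lambda \sup_q |V_q|_{\hat g}$, where $\lambda$ is the smallest constant with $DT^*N\subset \lambda D_{\hat g}$, i.e.\ with $|p|_{\hat g^*}\le \lambda |p|_{g^*}$ for all covectors; dually $|v|_{\hat g}\ge \lambda^{-1}|v|_g$ for all tangent vectors, hence $\lambda\sup_q|V_q|_{\hat g}\ge \sup_q|V_q|_g$. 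So no choice of invariant metric $\hat g$, and no conformal or dilation tuning, yields anything better than $2\sup_q|V_q|_g$. Since $\len(\gamma^x)=\int_0^1 |V_{t\cdot x}|_g\,dt$ is the \emph{average} speed of the orbit, one always has $\sup_q|V_q|_g\ \ge\ \sup_x \len(\gamma^x)$, with strict inequality whenever $|V|_g$ is nonconstant along some orbit --- the generic case. Your scheme therefore proves at best the weaker bound $2\sup_q|V_q|_g$, not the claimed $2\sup_x\len(\gamma^x)$; the obstruction is structural: you are trying to dominate sup-norm data by average (length) data through a metric ball, which is impossible.

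The paper avoids this with two ideas you are missing. First, it reparametrizes the \emph{action}, not the metric: since the orbits are noncontractible the action is fixed-point free, and the constant-speed reparametrization of each orbit assembles into a genuine smooth circle action (the time change $u(t,x)$ defined by arc length satisfies the cocycle condition $u(s+t,x)=u(t,x)+u(s,\varphi_{u(t,x)}(x))$ because orbit length is constant along each orbit), with the same orbits and free homotopy classes; its generator $Z$ satisfies $|Z_x|_g=\len(\gamma^x)$, so $\sup_x|Z_x|_g = L:=\sup_x\len(\gamma^x)$. Second, it applies Theorem \ref{thm:circle} not to a disk bundle but to the sublevel set $U:=H^{-1}\bigl((-L,L)\bigr)$ with $H(q,p)=p(Z_q)$: this set is invariant under the lifted circle action because $H$ is conserved by its own flow, it satisfies $\sup_U H-\inf_U H\le 2L$ by construction, and --- crucially --- it contains $DT^*N$ by Cauchy--Schwarz, since $|p(Z_q)|\le |p|_g\,|Z_q|_g< L$ when $|p|_g<1$. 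Monotonicity of $c_\HZ^S$ under inclusion then finishes the proof. The region $H^{-1}\bigl((-L,L)\bigr)$ is a ``slab,'' unbounded in the directions conormal to $Z$, and that is exactly why it can simultaneously contain the round disk bundle of the original metric and have Hamiltonian oscillation $2L$; no metric ball can do both, which is why your final step cannot be repaired within your framework.
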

\begin{proof}
Let $Z$ be a vector field on $N$, which generates the given circle action, i.e. 
$Z_x:= \tfrac{d}{dt} (t \cdot x)|_{t=0}\,(x \in N)$. 
Up to reparamatrization of the action, we may assume that $\sup_{x \in N} |Z_x| \le \sup_{x \in N} \len(\gamma^x)=:L$. 
The circle action on $N$ naturally extends to a Hamiltonian circle action on $T^*N$ generated by 
$H \in C^\infty(T^*N)$, where $H(q,p):=p(Z_q)$. 
Since $DT^*N \subset H^{-1}\bigl((-L,L)\bigr)$, we get
\[
c_\HZ^{\{c_N,\bar{\alpha}\}}(DT^*N, \omega_N) \le c_\HZ^{\{c_N,\bar{\alpha}\}}(H^{-1}\bigl((-L,L)\bigr), \omega_N) \le 2L,
\]
where the second inequality follows from Theorem \ref{thm:circle}.
\end{proof}
\begin{rem}
In \cite{I}, the author proved $c_\HZ(DT^*N,\omega_N)<\infty$ under same assumption as Corollary \ref{cor:cotangent}, based on the correspondence between 
the pair-of-pants product in Floer homology of cotangent bundles and the loop product on homology of loop spaces. 
\end{rem}

As a specific case of Corollary \ref{cor:cotangent}, we recover the following result of M. Jiang \cite{J}:

\begin{cor}
Let $N$ be a flat torus: $N:=\R/a_1 \Z \times \cdots \times \R/a_n \Z$, where $n \ge 1$ and $0<a_1 \le \cdots \le a_n$. 
Then, $c_\HZ(DT^*N, \omega_N) \le 2a_1$. 
\end{cor}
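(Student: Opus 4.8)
The plan is to produce an explicit circle action on the flat torus $N$ whose orbits are noncontractible loops of length exactly $a_1$, and then feed this action into Corollary \ref{cor:cotangent}.

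First I would rotate the shortest factor. For $t \in S^1 = \R/\Z$ and $x=(x_1,\dots,x_n) \in N$, define $t \cdot x := (x_1 + a_1 t, x_2, \dots, x_n)$. This is well defined as a circle action because increasing $t$ by $1$ translates $x_1$ by $a_1$, returning to the same point of $\R/a_1\Z$. For each $x$ the orbit $\gamma^x(t) = (x_1 + a_1 t, x_2, \dots, x_n)$ wraps once around the first circle factor, hence represents a standard generator of $\pi_1(N) \cong \Z^n$ and is in particular not contractible; moreover $[\gamma^x]=:\alpha$ does not depend on $x$. With respect to the flat metric one has $|\dot\gamma^x(t)| = a_1$ for all $t$, so $\len(\gamma^x)=a_1$ and therefore $\sup_{x \in N}\len(\gamma^x)=a_1$.

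Applying Corollary \ref{cor:cotangent} to this action yields $c_\HZ^{\{c_N,\bar\alpha\}}(DT^*N,\omega_N) \le 2a_1$. It then remains to remove the constraint on free homotopy classes. Since $DT^*N$ deformation retracts onto its zero section, $\pi_1'(N)$ is canonically identified with $\pi_1'(DT^*N)$, so $\{c_N,\bar\alpha\}$ is a subset of $\pi_1'(DT^*N)$; by the monotonicity property $S \subset S' \implies c_\HZ^S \ge c_\HZ^{S'}$ recorded in the introduction (with $S=\{c_N,\bar\alpha\}$ and $S'=\pi_1'(DT^*N)$) we obtain
\[
c_\HZ(DT^*N,\omega_N) = c_\HZ^{\pi_1'(DT^*N)}(DT^*N,\omega_N) \le c_\HZ^{\{c_N,\bar\alpha\}}(DT^*N,\omega_N) \le 2a_1,
\]
which is the assertion.

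There is no serious obstacle in this argument; the one point that requires care is the choice to rotate the \emph{shortest} factor. The hypothesis $a_1 \le a_2 \le \dots \le a_n$ guarantees that this coordinate rotation has the smallest orbit length among the obvious ones, which is precisely what makes the resulting bound $2a_1$ rather than $2a_j$ for some larger $a_j$.
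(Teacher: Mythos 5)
Your proof is correct and is exactly the argument the paper intends: the corollary is stated as a specific case of Corollary \ref{cor:cotangent}, obtained by rotating the shortest circle factor (orbits of length $a_1$, noncontractible, constant free homotopy class) and then using the monotonicity $c_\HZ \le c_\HZ^S$ together with the identification $\pi_1'(DT^*N) \cong \pi_1'(N)$. No gaps; your final remark about why one rotates the $a_1$-factor is also apt.
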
 

\section{Proof}

\subsection{Action selector} 
To prove Theorem \ref{thm:circle}, we use the notion of \textit{action selectors}. 
Let $(M, \omega)$ be an aspherical symplectic manifold, and 
$\mca{H}(M):= C_0^\infty(M \times [0,1])$. 

For any $H \in \mca{H}(M)$ and $t \in [0,1]$, $H_t \in C_0^\infty(M)$ is defined as 
$H_t(x):=H(t,x)$. 
For any $H \in \mca{H}(M)$, $(\varphi^t_H)_{0 \le t \le 1}$ is the flow generated by $(X_{H_t})_{0 \le t \le 1}$. i.e.
$\varphi^t_H: M \to M$ is defined as
\[
\varphi^0_H = \id_M, \qquad
\tfrac{d}{dt} \varphi^t_H = X_{H_t}(\varphi^t_H)\,(0 \le t \le 1).
\]

For any $H, K \in \mca{H}(M)$, we define $\bar{H}, H*K \in \mca{H}(M)$ as 
\[
\bar{H}(t,x):= -H(t, \varphi^t_H(x)), \qquad
H*K(t,x):= H(t,x) + K\bigl(t, (\varphi^t_H)^{-1}(x)\bigr). 
\]
It is easy to verifty the following properties: 
\begin{itemize}
\item $\varphi^t_{\bar{H}} = (\varphi^t_H)^{-1}$, $\varphi^t_{H*K} = \varphi^t_H \circ \varphi^t_K$ for any $0 \le t \le 1$. 
\item $(\mca{H}(M), *)$ is a group. The unit element is $0$, and the inverse of $H$ is $\bar{H}$. 
\end{itemize} 
 
For any $H \in \mca{H}(M)$ and $x \in \Fix(\varphi^1_H)$, we define $\gamma^x_H:S^1 \to M$ as $\gamma^x_H(t):=\varphi^t_H(x)$. We define
$\mca{P}(H):= \{ \gamma^x_H \mid x \in \Fix(\varphi^1_H) \}$.
$\mca{P}^\circ(H)$ denotes the set of $\gamma \in \mca{P}(H)$ which is contractible in $M$. 
Setting $D:=\{ z \in \C \mid |z| \le 1 \}$, 
for any contractible $\gamma:S^1 \to M$, we take $\bar{\gamma}: D \to M$ so that $\bar{\gamma}(e^{2\pi it})=\gamma(t)$ and define
\[
\mca{A}_H(\gamma):= \int_D \omega - \int_{S^1} H_t(\gamma(t)) dt. 
\]
It is well-defined since we have assumed that $(M,\omega)$ is aspherical. Then we define 
\[
\Sigma^\circ(H):= \{ \mca{A}_H(\gamma) \mid \gamma \in \mca{P}^\circ(H) \}.
\]
It is well-known that $\Sigma^\circ(H)$ is a nowhere dence subset in $\R$ (see Proposition 3.7 in \cite{S}). 
Finally, for any $H \in \mca{H}(M)$, we set 
\[
E_-(H):= - \int_0^1 \min H_t\,dt, \quad
E_+(H):= \int_0^1 \max H_t\,dt, \quad
\| H\|:= E_-(H) + E_+(H).
\]

\begin{defn}
Let $(M,\omega)$ be a connected aspherical symplectic manifold. 
An \textit{action selector} for $(M,\omega)$ is a map $\sigma: \mca{H}(M) \to \R$ which satisfies the following axioms: 
\begin{enumerate}
\item[(AS1)] $\sigma(H) \in \Sigma^\circ(H)$ for any $H \in \mca{H}(M)$. 
\item[(AS2)] For any $H \in \mca{H}^{\{c_M\}}_\HZ(M,\omega)$, $\sigma(H)=-\min H$. 
\item[(AS3)] $\sigma(H) \le E_-(H)$ for any $H \in \mca{H}(M)$. 
\item[(AS4)] $\sigma$ is continuous with respect to the $C^0$-topology of $\mca{H}(M)$. 
\item[(AS5)] $\sigma(H*K) \le \sigma(H) + E_-(K)$ for any $H, K \in \mca{H}(M)$.
\end{enumerate}
\end{defn}
\begin{rem}
The above set of axioms for action selectors follows that in \cite{FGS}, although our sign conventions are different from \cite{FGS}. 
Moreover, our notion of Hofer-Zehnder admissible Hamiltonians is wider than that in \cite{FGS}. 
\end{rem}

Our proof of Theorem \ref{thm:circle} is based on the following result:

\begin{thm}[\cite{S}, \cite{FS}]\label{thm:selector}
Let $(M,\omega)$ be a connected aspherical symplectic manifold. 
\begin{enumerate}
\item When $M$ is closed, there exists an action selector for $(M,\omega)$. 
\item When $M$ is convex, there exists an action selector for $(M,\omega)$. 
\end{enumerate}
\end{thm}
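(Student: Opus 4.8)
The plan is to construct $\sigma$ as a spectral invariant in filtered Hamiltonian Floer homology, following Schwarz in the closed case and Frauenfelder--Schlenk in the convex case. First I would set up, for each nondegenerate $H \in \mca{H}(M)$, the Floer chain complex $CF_*(H)$ generated by the contractible elements of $\mca{P}(H)$, graded by the Conley--Zehnder index and filtered by $\mca{A}_H$, with differential counting Floer cylinders for a generic $\omega$-compatible almost complex structure. The asphericity hypothesis $\omega|_{\pi_2(M)}=0$ is used twice: it makes $\mca{A}_H$ single-valued, so the filtration is well defined, and it excludes bubbling of holomorphic spheres, so the usual transversality and gluing arguments give $\partial^2=0$. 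Writing $CF^a_*(H)$ for the subcomplex generated by orbits of action $<a$, I obtain the filtered groups $\HF^a_*(H)$ and the comparison maps $\iota_a:\HF^a_*(H)\to\HF_*(H)$.

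Next I would invoke the PSS isomorphism to identify $\HF_*(H)\cong H_*(M)$ (with $\Z/2$-coefficients, say; no quantum corrections occur because $(M,\omega)$ is aspherical), and define
\[
\sigma(H):=\inf\{\,a\in\R \mid [M]\in\Im\bigl(\iota_a:\HF^a_*(H)\to\HF_*(H)\bigr)\,\},
\]
the min--max action level at which the fundamental class first appears; with the present sign conventions $[M]$ sits at the top of the action filtration, which is what produces the normalization $-\min H$ rather than $-\max H$. For degenerate $H$ I would define $\sigma$ by continuous extension, using the Lipschitz estimate below to see that $\sigma$ extends uniquely from the $C^\infty$-dense set of nondegenerate Hamiltonians.

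Then I would verify the five axioms. (AS1): the min--max value is a point at which $\Im\iota_a$ jumps, hence equals the action of some contractible $1$-periodic orbit, so $\sigma(H)\in\Sigma^\circ(H)$; for degenerate $H$ this persists in the limit because $\Sigma^\circ(H)$ is nowhere dense. The key analytic input for (AS3), (AS4) and (AS5) is the single energy estimate for monotone continuation maps,
\[
\int_0^1 \min_x(K_t-H_t)\,dt \ \le\ \sigma(H)-\sigma(K)\ \le\ \int_0^1 \max_x(K_t-H_t)\,dt .
\]
Taking $K=0$ (so $\sigma(0)=0$) gives $\sigma(H)\le E_-(H)$, which is (AS3); the two-sided form gives $|\sigma(H)-\sigma(K)|\le\int_0^1\max_x|H_t-K_t|\,dt$, which is the $C^0$-continuity (AS4); and applying the upper estimate to the pair $H*K,\,H$ and using $H_t-(H*K)_t=-K_t\circ(\varphi^t_H)^{-1}$, whose maximum over $x$ equals $-\min_x K_t$, yields $\sigma(H*K)-\sigma(H)\le E_-(K)$, which is (AS5). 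Finally (AS2) is a direct computation: for $H\in\mca{H}^{\{c_M\}}_\HZ(M,\omega)$ the defining condition forces every contractible nonconstant $1$-periodic orbit to have period $>1$, so no such orbit is a generator and $CF_*(H)$ reduces to the (Morse--Bott) complex of the constant orbits, whose actions are $-H(p)\in[0,-\min H]$; the fundamental class is then detected exactly at the top level $-\min H$ attained on $\{H=\min H\}$, giving $\sigma(H)=-\min H$.

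The main obstacle is the convex case, where solutions of Floer's equation could a priori escape to infinity. Here I would realize $\HF_*(H)$ through the convex exhaustion $M_1\subset M_2\subset\cdots$: using the Liouville fields $X_i$ (with $L_{X_i}\omega=\omega$) together with a maximum principle, one confines Floer cylinders for Hamiltonians that are linear at infinity to a fixed compact region, and then passes to the appropriate limit over the exhaustion. Re-establishing the PSS isomorphism $\HF_*(H)\cong H_*(M)$ and the continuation estimate above in this noncompact setting---so that the verification of (AS1)--(AS5) carries over verbatim---is the technical heart of the argument, and is precisely the content of \cite{FS}; in the closed case all of the above is \cite{S}.
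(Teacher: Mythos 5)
The paper does not prove Theorem \ref{thm:selector} itself but defers entirely to \cite{S} for the closed case and \cite{FS} for the convex case, and your proposal is an accurate outline of precisely those constructions: Schwarz's spectral invariant of the fundamental class defined through the PSS isomorphism and the action filtration, the two-sided continuation estimate yielding (AS3)--(AS5), and the Frauenfelder--Schlenk extension to convex manifolds via Liouville fields and a maximum principle along the exhaustion. Apart from two glosses that the cited references handle with more care --- for noncompact convex $M$ there is no absolute fundamental class, so the selector must be built from relative/truncated classes along the exhaustion $M_1 \subset M_2 \subset \cdots$, and the normalization (AS2) for a degenerate admissible $H$ (constant on an open set, not Morse) requires a homotopy--spectrality argument through $sH$ together with a comparison bump function rather than a direct Morse--Bott computation --- this is essentially the same approach as the paper's cited proof.
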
 

(1) was proved by M. Schwarz in \cite{S}, based on the Piunkhin-Salamon-Schwarz isomorphism.  
(2) was proved by U. Frauenfelder and F. Schlenk in \cite{FS}, based on \cite{S} and 
Floer theory for convex symplectic manifolds (\cite{V}). 

\subsection{A variant of the energy-capacity inequality} 

First we prove the following result, which can be considered to be a variant of the energy-capacity inequality: 

\begin{thm}\label{thm:main} 
Let $(M,\omega)$ be a connected aspherical symplectic manifold, which is closed or convex. 
Let $U$ be an open set in $M$ and $H \in \mca{H}(M)$ such that: 
\begin{enumerate}
\item $\varphi^1_H|_U = \id_U$. 
\item For any $x \in U$, $\gamma^x_H: S^1 \to M; t \mapsto \varphi^t_H(x)$ is not contractible. 
Moreover, $[\gamma^x_H] \in \pi_1'(M)$ does not depend on $x \in U$. 
\end{enumerate}
Then, setting $\alpha:=[\gamma^x_H] \in \pi_1'(M)$, 
$c_\HZ^{(i^M_U)_*^{-1}(\{c_M,\bar{\alpha}\})}(U,\omega) \le \|H\|$. 
\end{thm}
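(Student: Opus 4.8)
The plan is to fix an action selector $\sigma$ for $(M,\omega)$, which exists by Theorem \ref{thm:selector}, and to prove that $-\min K \le \|H\|$ for every $K \in \mca{H}_\HZ^{(i^M_U)_*^{-1}(\{c_M,\bar\alpha\})}(U,\omega)$, viewed as an autonomous element of $\mca{H}(M)$ by extension by zero; taking the supremum over such $K$ then gives the assertion, since $c_\HZ^{(i^M_U)_*^{-1}(\{c_M,\bar\alpha\})}(U,\omega)$ is by definition this supremum. First I would check that $K$ satisfies the axioms defining $\mca{H}_\HZ^{\{c_M\}}(M,\omega)$: as $\supp K \subset U$, every periodic orbit of $X_K$ lies in $U$, and a contractible one has class in $(i^M_U)_*^{-1}(\{c_M,\bar\alpha\})$, so by admissibility of $K$ on $U$ any nonconstant such orbit has period $>1$. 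By (AS2) this yields $\sigma(K)=-\min K$, so it remains to show $\sigma(K)\le\|H\|$.

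The core is the identity $\sigma(H*K)=\sigma(H)$. I would analyse the contractible $1$-periodic orbits of $H*K$ using $\varphi^t_{H*K}=\varphi^t_H\circ\varphi^t_K$ and $\supp K\subset U$. A point $x\notin\supp K$ is fixed by the whole flow of $K$, so the orbit through it is the pure orbit $t\mapsto\varphi^t_H(x)$, whose $\mca{A}_{H*K}$-action equals its $\mca{A}_H$-action (the $K$-term vanishes there). A point $x\in U$ with $\varphi^1_K(x)=x$ is a fixed point of $\varphi^1_{H*K}$ because $\varphi^1_H|_U=\id_U$; writing $\delta(t):=\varphi^t_K(x)$, the homotopy $(r,t)\mapsto\varphi^t_H(\varphi^{rt}_K(x))$ (which has fixed start $x$ and whose endpoint traces $\delta$, using $\varphi^1_H|_U=\id_U$) shows the orbit through $x$ is freely homotopic to the concatenation of $\gamma^x_H$ and $\delta$, so its class in $\pi_1(M,x)$ is $[\gamma^x_H]\,[\delta]$. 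Hence if this orbit is contractible then $\delta$ is a nonconstant $1$-periodic orbit of $X_K$ of free class $\bar\alpha$, lying in $(i^M_U)_*^{-1}(\{c_M,\bar\alpha\})$, contradicting admissibility of $K$ (which forces period $>1$, while a $1$-periodic orbit has period $\le 1$); a critical point $x$ of $K$ gives instead the orbit $\gamma^x_H$ of noncontractible class $\alpha$. Thus the contractible orbits of $H*K$ are exactly the pure $H$-orbits through points off $\supp K$, with unchanged actions, giving $\Sigma^\circ(H*K)=\Sigma^\circ(H)$. The same analysis applies to $sK$ for $s\in[0,1]$ (a $1$-periodic orbit of $sK$ is a $K$-orbit of period $\le s\le 1$), so $\Sigma^\circ(H*(sK))=\Sigma^\circ(H)$ for all $s$; since $\sigma(H*(sK))$ is continuous in $s$ by (AS4) and its image is a connected subset of the nowhere-dense set $\Sigma^\circ(H)$, it is a point, whence $\sigma(H*K)=\sigma(H)$. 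This homotopy-class bookkeeping — extracting the class $\bar\alpha$ and using the period condition to discard the interaction orbits — is the step I expect to be the \emph{main obstacle}, since it is precisely where noncontractibility of the $H$-orbits and the choice of the class $\bar\alpha$ are used.

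Next I would record conjugation invariance $\sigma(H*K*\bar H)=\sigma(K)$ by the same device: along $s\mapsto(sH)*K*\overline{(sH)}$ the time-$1$ map is $\varphi^1_{sH}\,\varphi^1_K\,(\varphi^1_{sH})^{-1}$, so its contractible orbits correspond to those of $K$ with identical actions; thus $\Sigma^\circ$ is constantly $\Sigma^\circ(K)$ along the path, and $\sigma$ is constant by (AS4).

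Finally I would combine these with (AS5) and (AS3). Recalling $E_-(\bar H)=E_+(H)$,
\[
-\min K=\sigma(K)=\sigma(H*K*\bar H)\le\sigma(H*K)+E_-(\bar H)=\sigma(H)+E_+(H)\le E_-(H)+E_+(H)=\|H\|.
\]
Taking the supremum over all admissible $K$ yields $c_\HZ^{(i^M_U)_*^{-1}(\{c_M,\bar\alpha\})}(U,\omega)\le\|H\|$, as required.
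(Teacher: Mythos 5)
Your overall architecture is sound and runs parallel to the paper's: fix an action selector, use (AS2) to identify $-\min K$ with $\sigma(K)$, prove a key identity of the form $\sigma(\text{composition of }K\text{ and }H)=\sigma(H)$ by the continuity-plus-nowhere-density argument, and close with (AS5) and (AS3). Your treatment of the identity $\sigma(H*K)=\sigma(H)$ is correct and in one respect cleaner than the paper's: because you put the $K$-flow on the inside ($\varphi^t_{H*K}=\varphi^t_H\circ\varphi^t_K$), a point $x\notin\supp K$ is genuinely fixed by the inner flow, so the composed orbit through it is exactly $\gamma^x_H$ with unchanged action, and your explicit homotopy $(r,t)\mapsto\varphi^t_H(\varphi^{rt}_K(x))$ handles the points of $U$. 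The paper instead composes in the other order and must first force the two flows to have disjoint supports in time (Lemma \ref{lem:reparametrization} plus reparametrizing $H$ so that $H_t\equiv 0$ for $t\ge 1/2$) so that the composed orbit is a literal concatenation; your homotopy makes that preparation unnecessary.

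However, your choice of order creates a genuine gap at the final step. The paper's chain starts from the exact identity $K_\chi=(K_\chi*H)*\bar H$, which holds because $*$ is a group operation with $H*\bar H=0$; after that only (AS5) and (AS3) are needed. In your order, $H*K*\bar H$ is \emph{not} equal to $K$ --- it is the conjugation of $K$ by the path $\varphi^t_H$ --- so you need the additional statement $\sigma(H*K*\bar H)=\sigma(K)$, which is not among the axioms (AS1)--(AS5). Your proposed justification reduces it, via the same continuity device, to the claim that $\Sigma^\circ\bigl((sH)*K*\overline{(sH)}\bigr)=\Sigma^\circ(K)$ for every $s$, i.e.\ to conjugation invariance of the contractible action spectrum; but you then simply assert this (``contractible orbits correspond to those of $K$ with identical actions''), and that assertion is the entire content of the step. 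The free-homotopy correspondence is indeed easy (e.g.\ via $(r,t)\mapsto\varphi^{rt}_{sH}\,\varphi^t_K\,(\varphi^{rt}_{sH})^{-1}\bigl(\varphi^r_{sH}(y)\bigr)$ for $y\in\Fix(\varphi^1_K)$), but the equality of \emph{actions} under conjugation is a genuine computation --- essentially invariance of the action spectrum under homotopy of Hamiltonian paths rel endpoints combined with invariance under conjugation by a fixed symplectomorphism (cf.\ \cite{S}) --- and it is delicate on closed $M$, where elements of $\mca{H}(M)$ are not mean-normalized, so the spectrum is not an invariant of the path of diffeomorphisms alone. The fact is true and standard, so your proof can be completed by proving or citing it; but as written this one clause carries the whole weight of the argument, and it is exactly the step that the paper's ordering (compose as $K*H$ and cancel $\bar H$ on the right by the group law) is designed to avoid.
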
 

The proof is similar to the proof of the energy-capacity inequality in \cite{FGS} (section 2.1 in \cite{FGS}).
In the following, $\sigma: \mca{H}(M) \to \R$ denotes an action selector for $(M,\omega)$, which exists due to Theorem \ref{thm:selector}.

Suppose that $U, H, \alpha$ are as in Theorem \ref{thm:main}. 
We have to show $-\min K \le \|H\|$ for any $K \in \mca{H}^{\{c_M,\bar{\alpha}\}}_{\HZ}(M)$, $\supp K \subset U$.  
Since $K \in \mca{H}^{\{c_M\}}_\HZ(M)$, $\sigma(K)=-\min K$ by (AS2). 
Hence it is enough to show $\sigma(K) \le \|H\|$. 
First notice the following lemma:

\begin{lem}\label{lem:reparametrization}
For any $\chi \in C^\infty([0,1])$ satisfying $\int_0^1 \chi(t)\, dt=1$, 
we set $K_\chi \in \mca{H}(M)$ by $K_\chi(x,t):=K(x)\chi(t)$. 
Then, $\sigma(K_\chi) = \sigma(K)$. 
\end{lem}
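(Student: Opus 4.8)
The plan is to show that reparametrizing an autonomous Hamiltonian in time by a function of integral one alters neither the periodic orbits nor their actions, so that $K_\chi$ and $K$ have the same action spectrum $\Sigma^\circ$; the equality $\sigma(K_\chi)=\sigma(K)$ then follows from the selector axioms by a connectedness argument. First I would identify the flow of $K_\chi$ as a time-reparametrization of the flow of $K$. Since $(K_\chi)_t=\chi(t)K$, we have $X_{(K_\chi)_t}=\chi(t)X_K$, and putting $\beta(t):=\int_0^t\chi(s)\,ds$ a direct check gives $\varphi^t_{K_\chi}=\varphi^{\beta(t)}_K$. Because $\beta(1)=1$, the time-one maps coincide, so $\Fix(\varphi^1_{K_\chi})=\Fix(\varphi^1_K)$, and for each fixed point $x$ the orbit $\gamma^x_{K_\chi}(t)=\gamma^x_K(\beta(t))$ is merely a reparametrization of $\gamma^x_K$; in particular $\gamma^x_{K_\chi}$ is contractible iff $\gamma^x_K$ is, so the identification of fixed points restricts to a bijection $\mca{P}^\circ(K_\chi)\cong\mca{P}^\circ(K)$.

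Next I would verify that the action is unchanged under this correspondence. For the symplectic area term, the two loops are freely homotopic, so asphericity makes the capping integrals $\int_D\omega$ agree. For the Hamiltonian term, the substitution $u=\beta(t)$ yields $\int_0^1 K(\varphi^{\beta(t)}_K(x))\chi(t)\,dt=\int_0^1 K(\varphi^u_K(x))\,du$, which is exactly the Hamiltonian term for $K$; this identity is valid for arbitrary $C^1$ maps $\beta$ with $\beta(0)=0,\ \beta(1)=1$, monotone or not, since the integrand equals $\tfrac{d}{dt}G(\beta(t))$ for a primitive $G$ of $u\mapsto K(\varphi^u_K(x))$, and the flow $\varphi^u_K$ is complete by compact support of $K$. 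Hence $\mca{A}_{K_\chi}(\gamma^x_{K_\chi})=\mca{A}_K(\gamma^x_K)$ for all $x$, and therefore $\Sigma^\circ(K_\chi)=\Sigma^\circ(K)$.

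Finally, to upgrade equal spectra to equal selector values, I would interpolate. Set $\chi_s:=(1-s)+s\chi$ for $s\in[0,1]$; then each $\chi_s\in C^\infty([0,1])$ satisfies $\int_0^1\chi_s\,dt=1$, with $\chi_0\equiv 1$ (so $K_{\chi_0}=K$) and $\chi_1=\chi$, and $K_{\chi_s}\in\mca{H}(M)$. The computation above applies verbatim to every $\chi_s$, giving $\Sigma^\circ(K_{\chi_s})=\Sigma^\circ(K)$ for all $s$. By (AS1), $\sigma(K_{\chi_s})\in\Sigma^\circ(K)$ for each $s$, while by (AS4) the map $s\mapsto\sigma(K_{\chi_s})$ is continuous. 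Its image is then a connected subset of the nowhere dense set $\Sigma^\circ(K)$, hence contains no nondegenerate interval, hence is a single point. Comparing the endpoints $s=0$ and $s=1$ gives $\sigma(K)=\sigma(K_\chi)$.

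I expect the main obstacle to be the second step: arguing cleanly that the capping-disk area is reparametrization-invariant (handled via asphericity together with free homotopy of the two loops) and that the change-of-variables identity for the Hamiltonian term survives even when $\chi$, and hence $\beta$, fails to be monotone. Once the common spectrum is in hand, the concluding connectedness argument via (AS1) and (AS4) is entirely standard.
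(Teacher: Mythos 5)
Your overall strategy is exactly the paper's: interpolate via $\chi_s := s\chi + (1-s)$, check $\Sigma^\circ(K_{\chi_s})=\Sigma^\circ(K)$, and conclude by (AS1), (AS4) and nowhere density of the spectrum that $s \mapsto \sigma(K_{\chi_s})$ is constant. The paper dismisses the spectrum computation as ``easy to verify''; you carry it out, and most of your verification (the identity $\varphi^t_{K_\chi}=\varphi^{\beta(t)}_K$, the coincidence of the time-one maps since $\beta(1)=1$, and the change-of-variables for the Hamiltonian term, valid even for non-monotone $\beta$) is correct. In fact the Hamiltonian term is even easier than you make it: $K$ is autonomous, hence constant along its own flow, so both Hamiltonian integrals equal $K(x)$.

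However, one justification you give is genuinely wrong as stated: ``the two loops are freely homotopic, so asphericity makes the capping integrals agree.'' Free homotopy of contractible loops in an aspherical manifold does \emph{not} imply equality of capping areas. Take $M=\R^2$ with the standard form: circles of radius $1$ and $2$ are freely homotopic and contractible, but their capping areas are $\pi$ and $4\pi$. Asphericity only guarantees that the capping area of a \emph{fixed} loop is independent of the choice of capping; it says nothing about two distinct loops in the same free homotopy class. What saves your argument is the specific form of the homotopy here: $\gamma^x_{K_\chi}=\gamma^x_K\circ\beta$ with $\beta$ of degree one, and the homotopy $h_s(t):=\gamma^x_K\bigl((1-s)t+s\beta(t)\bigr)$ factors through the one-dimensional image of $\gamma^x_K$, so $h^*\omega \equiv 0$ and the swept cylinder has zero symplectic area. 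Gluing this cylinder to a capping of $\gamma^x_K$ produces a capping of $\gamma^x_{K_\chi}$ with the same area, which (by asphericity, now used correctly) computes $\int_D \omega$ for the reparametrized loop. With that substitution your proof is complete and coincides with the paper's.
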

\begin{proof}
For $0 \le s \le 1$, set $\chi_s:=s \chi + (1-s)$. 
Then, it is easy to verify that $\Sigma^\circ(K_{\chi_s})=\Sigma^\circ(K)$ for any $s$. 
By (AS1), $\sigma(K_{\chi_s}) \in \Sigma^\circ(K)$  for any $0 \le s \le 1$. On the otherhand, 
$\sigma(K_{\chi_s})$ depends continuously on $s$ by (AS4). 
Since $\Sigma^\circ(K)$ is nowhere dence, $[0,1]\to \R; s \mapsto \sigma(K_{\chi_s})$ is a constant function. Hence
$\sigma(K_\chi) =\sigma(K)$. 
\end{proof}
\begin{rem}
The above lemma is same as Lemma 2.2 in \cite{FGS}. We have included the proof for the convenience of the reader.
\end{rem}

Take $\chi \in C^\infty([0,1])$ so that $\int_0^1 \chi(t) \, dt=1$ and $\supp \chi \subset (1/2,1)$. 
By Lemma \ref{lem:reparametrization}, it is enough to show $\sigma(K_\chi) \le \|H\|$. 
After reparametrizing in $t$, we may assume that $H_t \equiv 0$ for $1/2 \le t \le 1$. 
Then 
\[
aK_\chi*H(t,x) = \begin{cases}
                       H(t,x) &(0 \le t \le 1/2) \\
                      aK_\chi(t,x) &(1/2 \le t \le 1)
                 \end{cases}.
\]
We claim that 
$\Sigma^\circ(aK_\chi*H) \subset \Sigma^\circ(H)$ for any $0 \le a \le 1$. 
Let $x \in \Fix(\varphi^1_{aK_\chi*H})$ such that $\gamma^x_{aK_\chi*H}$ is contractible in $M$. 
We distinguish two cases: 
\begin{itemize}
\item Suppose $x \in U$. 
Since $\varphi^1_H|_U=\id_U$, $x \in \Fix(\varphi^1_{aK_\chi})$. 
Since $0 \le a \le 1$ and $K \in \mca{H}^{\{\bar{\alpha}\}}_\HZ(M)$, $[\gamma^x_{aK_\chi}] \ne \bar{\alpha}$. 
On the otherhand, $\gamma^x_{aK_\chi*H}$ is a concatanation of 
$\gamma^x_H$ and $\gamma^x_{aK_\chi}$, and $[\gamma^x_H]=\alpha$. 
Hence $\gamma^x_{aK_\chi*H}$ is not contractible: it contradicts our assumption. 
\item Suppose $x \notin U$. 
Since $\varphi^1_{aK_\chi}|_{M \setminus U} = \id_{M \setminus U}$, $x \in \Fix(\varphi^1_H)$.
Using $\supp K \subset U$, it is easy to verify that 
$\gamma^x_{aK_\chi*H}=\gamma^x_H$, 
$aK_\chi*H(t,\gamma^x_{aK_\chi*H}(t)) = H(t,\gamma^x_H(t))$. 
Hence $\mca{A}_{aK_\chi*H}(\gamma^x_{aK_\chi*H}) = \mca{A}_H(\gamma^x_H) \in \Sigma^\circ(H)$.
\end{itemize}
Hence we have verified $\Sigma^\circ(aK_\chi*H) \subset \Sigma^\circ(H)$ for $0 \le a \le 1$. 
By (AS4), $\sigma(aK_\chi*H)$ depends continuously on $a$. 
Since $\Sigma^\circ(H)$ is nowhere dence, $[0,1] \to \R; a \mapsto \sigma(aK_\chi*H)$ is a constant function. Hence
$\sigma(H)= \sigma(K_\chi*H)$. 
Finally, we obtain $\sigma(K_\chi) \le \|H\|$ by 
\begin{align*}
\sigma(K_\chi)&= \sigma(K_\chi*H*\bar{H}) \le \sigma (K_\chi* H) + E_-(\bar{H}) \\
&= \sigma(H) + E_+(H) \le E_-(H)+E_+(H) = \| H\|.
\end{align*}
\qed

\subsection{Proof of Theorem \ref{thm:circle}} 

Finally we prove Theorem \ref{thm:circle}:

For any $S \subset \pi_1'(U)$, it is easy to verify that 
$c_\HZ^S(U,\omega) = \sup_V c_\HZ^{(i_V^U)_*^{-1}(S)}(V,\omega)$, 
where $V$ runs over all open sets of $U$ with compact closures.
Hence it is enough to show 
\[
c_\HZ^{(i_V^M)_*^{-1}(\{c_M,\bar{\alpha}\})}(V,\omega) \le \sup H - \inf H
\]
for any open $V \subset U$ with a compact closure. 
Since $H \in C^\infty(U)$ generates the given circle action on $U$, 
$V':=\bigcup_{0 \le t \le 1} \varphi^t_H(V)$ is invariant under the circle action, and 
it is again an open set of $U$ with compact closure. 
Then there exists $\rho \in C_0^\infty(U)$ such that $0 \le \rho \le 1$ and $\rho|_{V'} \equiv 1$. 
Then $\rho H \in C_0^\infty(U)$ extends to $M$, and $\varphi^t_{\rho H}|_V = \varphi^t_H|_V$ for any $0 \le t \le 1$. 
By adding constant to $H \in C^\infty(U)$ if necessary, we may assume that $\inf H \le 0 \le \sup H$. Then
$\inf \rho H \ge \inf H$, $\sup \rho H \le \sup H$, hence $\|\rho H\| \le \| H\|$. 
Finally we get 
\[
c_\HZ^{(i_V^M)_*^{-1}(\{c_M,\bar{\alpha}\})}(V,\omega) \le \|\rho H\| \le \| H \|.
\]
The first inequality follows from Theorem \ref{thm:main} (applied to $V$ and $\rho H$).
\qed

\textbf{Acknowledgement.} The author would like to appreciate Professor Kenji Fukaya for his encouregement.
The author is supported by Grant-in-Aid for JSPS fellows.

\end{document}